\author{Oliver Planes Osorio} 
\email{oliver.planes@upc.edu}
\date{}
\title{Expansion series of $f(x)=x^x$ and characterization of its coefficients}
\newtheorem{theorem}{Theorem}[section]
\newtheorem{lemma}[theorem]{Lemma}
\theoremstyle{definition}
\newtheorem{definition}[theorem]{Definition}
\newtheorem{proposition}[theorem]{Proposition}
\theoremstyle{remark}
\newtheorem{corollary}[theorem]{Corollary}
\numberwithin{equation}{section}
\newcommand{\oli}[2]{\Omega(#1,#2)}
\newcommand{\olipt}{\Omega(\cdot,\cdot)}
\newcommand{\del}[2]{\Delta_{#2}^{(#1)}(x)}
\newcommand{\dela}[2]{\Delta_{#2}^{(#1)}(a)}
\newcommand{\delx}[3]{\Delta_{#2}^{(#1)}(#3)}
\begin{document}
\maketitle
\begin{small}
Departament de Matem\`atica Aplicada III, Universitat Polit\`ecnica de Catalunya-Barcelona Tech, EET-Campus Terrassa, Colom 1, 08222, Barcelona, Spain.\\
%\href{oliver.planes@upc.edu}{oliver.planes@upc.edu}.
\end{small}

%\section{Introducci\'on}
\begin{abstract}
In this paper we study the development in Taylor series of the function $f(x)=x^x$. Section \textbf{\ref{sec01}} establishes a recursive relationship between successive derivatives of $f$ by using the coefficients $\olipt$ defined therein. From recursion between the derivatives you get one general description of them (section \textbf{\ref{sec02}}). Finally, section \textbf{\ref{sec03}} has the main result, the expansion series of $f$.\\
Section \textbf{\ref{sec04}} deals with numbers $\olipt$: characterization \textbf{(\ref{sec04.01})}, its relationship with rencontre numbers (\ref{sec04.02}) and its emergence as coefficients in certain polynomial families. The specific use of some of these polynomials allows eventually go deeper in the description of the series.
\end{abstract}
\section{Recursion of the derivatives of $f(x)=x^x$}\label{sec01}
For the series expansion of the function one needs a description of its derivatives, but $f'(x)$ is not immediately determined. Instead of deriving $f$, $f'$ can be found deriving $\ln(f(x))$.\\
On one hand,
$$\frac{\partial }{\partial x}\ln(f(x))=\frac{f'(x)}{f(x)}$$
on the other hand,
$$\ln(f(x))=\ln(x^x)=x\ln(x)$$
and therefore 
$$\frac{\partial }{\partial x}\ln(f(x))=\ln(x)+1$$
Joining both equalities one gets
\begin{equation}\label{eq01}
f'(x)=f(x)\ln(x)+f(x)
\end{equation}
Next derivatives can be found deriving successively $f'$. Note that $f$ appears in the expression of $f'$. This fact provides a recursive relationship between the derivatives of $f$, relationship characterized from the numbers $\olipt$ defined below:

\begin{definition}\label{oli01}
Given $a\in\mathbb{Z}_{>0}$ and $b\in\mathbb{Z}_{\geq 0}$, $\oli{a}{b}$ is defined by
$$\begin{cases}
\oli{a}{0}=1\\
\oli{a}{1}=\oli{a-1}{1}+1\\
\oli{a}{b}=\oli{a-1}{b}-(b-1)\oli{a-1}{b-1}\mbox{ if } b=2,...,a-1\\
\oli{a}{b}=0 \mbox{ if } b>a
\end{cases}$$ 
\end{definition}
Numbers $\olipt$ allow to relate the derivatives of $f$, as shown in the following proposition:
\begin{proposition}\label{oli02}
Let $f^{(n)}(x)$ be the $n-$th derivative of $f(x)=x^x$, then
\begin{equation}\label{eq02}
f^{(n)}(x)=f^{(n-1)}(x)\ln(x)+\sum_{i=0}^{n-1}\oli{n}{i}\frac{f^{(n-1-i)}(x)}{x^i}
\end{equation}
\end{proposition}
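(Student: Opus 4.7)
The natural approach is induction on $n$, with equation~(\ref{eq01}) providing both the base case and the engine for the inductive step.

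For $n=1$, formula~(\ref{eq02}) reduces to $f'(x)=f(x)\ln(x)+\oli{1}{0}f(x)$, which matches~(\ref{eq01}) because $\oli{1}{0}=1$ by Definition~\ref{oli01}. For the inductive step, assume~(\ref{eq02}) holds for some $n\geq 1$ and differentiate both sides with respect to $x$. The left side becomes $f^{(n+1)}(x)$. On the right, the product rule applied to $f^{(n-1)}(x)\ln(x)$ yields $f^{(n)}(x)\ln(x)+f^{(n-1)}(x)/x$, while each summand $\oli{n}{i}f^{(n-1-i)}(x)/x^i$ yields $\oli{n}{i}f^{(n-i)}(x)/x^i-i\,\oli{n}{i}f^{(n-1-i)}(x)/x^{i+1}$.

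After reindexing the subtracted piece via $j=i+1$, the right-hand side takes the form $f^{(n)}(x)\ln(x)$ plus a single sum $\sum_{i=0}^{n} c_i\,f^{(n-i)}(x)/x^i$. I would then identify each coefficient $c_i$ with $\oli{n+1}{i}$ by comparing against the clauses of Definition~\ref{oli01}: at $i=0$, only the unshifted sum contributes $c_0=\oli{n}{0}=1=\oli{n+1}{0}$; at $i=1$, the stray term $f^{(n-1)}(x)/x$ combines with $\oli{n}{1}f^{(n-1)}(x)/x$ to give $c_1=\oli{n}{1}+1=\oli{n+1}{1}$; and for $2\leq i\leq n-1$, the algebra yields $c_i=\oli{n}{i}-(i-1)\oli{n}{i-1}=\oli{n+1}{i}$. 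These three cases correspond precisely to the three non-vanishing clauses of the recursion.

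The one delicate point is the top index $i=n$, where only the shifted sum contributes, giving $c_n=-(n-1)\oli{n}{n-1}$. For this to agree with $\oli{n+1}{n}=\oli{n}{n}-(n-1)\oli{n}{n-1}$ obtained from the same recursion, one must read Definition~\ref{oli01} as setting $\oli{n}{n}=0$, i.e.\ extending the clause $\oli{a}{b}=0$ for $b>a$ to the boundary $b=a$ (consistent with the fact that the summation in~(\ref{eq02}) only reaches $i=n-1$). With this convention the bookkeeping at the boundary closes, and the induction yields~(\ref{eq02}) for every $n\geq 1$.
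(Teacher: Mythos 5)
Your proof is correct and follows essentially the same route as the paper: induction on $n$, differentiation of the inductive hypothesis, reindexing the shifted sum, and matching each coefficient $c_i$ against the clauses of Definition~\ref{oli01}, including the same identification $-(n-1)\oli{n}{n-1}=\oli{n+1}{n}$ at the top index. Your remark that this boundary step requires reading $\oli{n}{n}=0$ into the definition is a fair and careful observation --- the paper asserts the identity ``by definition'' without comment here, though it does use $\oli{b}{b}=0$ explicitly later in the proof of Proposition~\ref{num1}.
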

\begin{proof}
By induction. If $n=1$, then
$$f'(x)=f(x)\ln(x)+\oli{1}{0}\frac{f(x)}{x^0}=f(x)\ln(x)+f(x)$$
which is right (\ref{eq01}). Suppose now that the equation (\ref{eq02}) is right for $n$ and derive it:
$$f^{(n+1)}(x)=f^{(n)}(x)\ln(x)+\frac{f^{(n-1)}(x)}{x}$$
$$+\sum_{i=0}^{n-1}\oli{n}{i}\frac{f^{(n-i)}(x)x^i-f^{(n-1-i)}(x)ix^{i-1}}{x^{2i}}$$
The summation can be simplified separating it into two parts and resetting indexes in one of them:
$$\sum_{i=0}^{n-1}\oli{n}{i}\frac{f^{(n-i)}(x)x^i-f^{(n-1-i)}(x)ix^{i-1}}{x^{2i}}$$
$$=\sum_{i=0}^{n-1}\oli{n}{i}\frac{f^{(n-i)}(x)}{x^{i}}-
\sum_{i=0}^{n-1}\oli{n}{i}\frac{f^{(n-1-i)}(x)i}{x^{i+1}}$$
$$=\sum_{i=0}^{n-1}\oli{n}{i}\frac{f^{(n-i)}(x)}{x^{i}}-
\sum_{i=1}^{n}\oli{n}{i-1}\frac{f^{(n-i)}(x)(i-1)}{x^{i}}$$
$$=\oli{n}{0}f^{(n)}(x)+\sum_{i=1}^{n-1}\oli{n}{i}\frac{f^{(n-i)}(x)}{x^{i}}$$
$$-\oli{n}{n-1}\frac{f(x)(n-1)}{x^{n}}-\sum_{i=1}^{n-1}\oli{n}{i-1}\frac{f^{(n-i)}(x)(i-1)}{x^{i}}$$
$$=\oli{n}{0}f^{(n)}(x)-\oli{n}{n-1}\frac{f(x)(n-1)}{x^{n}}$$
$$+\sum_{i=1}^{n-1}\frac{f^{(n-i)}(x)}{x^{i}}\left(\oli{n}{i}-(i-1)\oli{n}{i-1}\right)$$
Now we can apply the definition of $\olipt$ to write the last summation in a more appropriate way:
$$\sum_{i=1}^{n-1}\frac{f^{(n-i)}(x)}{x^{i}}\left(\oli{n}{i}-(i-1)\oli{n}{i-1}\right)$$
$$=\frac{f^{(n-1)}(x)}{x}\oli{n}{1}+\sum_{i=2}^{n-1}\frac{f^{(n-i)}(x)}{x^{i}}\left(\oli{n}{i}-(i-1)\oli{n}{i-1}\right)$$
$$=\frac{f^{(n-1)}(x)}{x}\oli{n}{1}+\sum_{i=2}^{n-1}\frac{f^{(n-i)}(x)}{x^{i}}\oli{n+1}{i}$$
and replacing in the expression of $f^{(n+1)}(x)$, we have:
$$f^{(n+1)}(x)=f^{(n)}(x)\ln(x)+\frac{f^{(n-1)}(x)}{x}+\oli{n}{0}f^{(n)}(x)$$
$$-\oli{n}{n-1}\frac{f(x)(n-1)}{x^{n}}+\frac{f^{(n-1)}(x)}{x}\oli{n}{1}$$
$$+\sum_{i=2}^{n-1}\frac{f^{(n-i)}(x)}{x^{i}}\oli{n+1}{i}$$

$$f^{(n)}(x)\ln(x)+\frac{f^{(n-1)}(x)}{x}(\oli{n}{1}+1)+\oli{n}{0}f^{(n)}(x)$$
$$-\oli{n}{n-1}\frac{f(x)(n-1)}{x^{n}}+\sum_{i=2}^{n-1}\frac{f^{(n-i)}(x)}{x^{i}}\oli{n+1}{i}$$
Finally, by definition of $\olipt$, we know that $\oli{n}{1}+1=\oli{n+1}{1}$, $\oli{n}{0}=\oli{n+1}{0}$ and $-\oli{n}{n-1}(n-1)=\oli{n+1}{n}$ so
$$f^{(n+1)}(x)=f^{(n)}(x)\ln(x)+\frac{f^{(n-1)}(x)}{x}\oli{n+1}{1}+\oli{n+1}{0}f^{(n)}(x)$$
$$+\oli{n+1}{n}\frac{f(x)}{x^{n}}+\sum_{i=2}^{n-1}\frac{f^{(n-i)}(x)}{x^{i}}\oli{n+1}{i}$$
$$=f^{(n)}(x)\ln(x)+\sum_{i=0}^{n}\frac{f^{(n-i)}(x)}{x^{i}}\oli{n+1}{i}$$

\end{proof}

\section{General expression of the derivatives of $x^x$}\label{sec02}
In section \ref{sec01} we have obtained a recursive relationship of the derivatives of $x^x$. Through this recursion   and the next definition, a general direct (non-recursive) expression of the derivatives can be found (proposici\'on \ref{delta02}):
\begin{definition}\label{delta01}
For all $n,k\in \mathbb{Z}_{\geq 0}$ we define
$$\begin{cases}
\del{n}{0}=1\\
\del{n}{k}=\del{n-1}{k}+\sum_{i=0}^{k-1}\oli{n}{i}\frac{1}{x^i}\del{n-1-i}{k-1-i} \mbox{ if }k=1,\dots,n\\
\del{n}{k}=0\mbox{ if } k>n \end{cases}$$
\end{definition}
\begin{proposition}\label{delta02}
Let $f^{(n)}(x)$ be the $n-$th derivative of the function $f(x)=x^x$. Then
\begin{equation}\label{eq03}
f^{(n)}(x)=f(x)\sum_{i=0}^{n}\del{n}{i}\ln^{n-i}(x)
\end{equation}
\end{proposition}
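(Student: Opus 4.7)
The plan is to prove the statement by induction on $n$, using the recursion of Proposition \ref{oli02} for the left-hand side and the recursive definition of $\del{n}{k}$ (Definition \ref{delta01}) for the right-hand side, matching them term-by-term according to powers of $\ln(x)$.

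For the base case $n=0$ (or $n=1$, starting from the formula \eqref{eq01}), one checks directly that $f(x)=f(x)\cdot\del{0}{0}\cdot\ln^{0}(x)$, which is immediate from $\del{0}{0}=1$. For the inductive step, assuming \eqref{eq03} holds for every index $\leq n-1$, I would substitute the inductive expression for each $f^{(n-1-i)}(x)$ appearing in the recursion
$$f^{(n)}(x)=f^{(n-1)}(x)\ln(x)+\sum_{i=0}^{n-1}\oli{n}{i}\frac{f^{(n-1-i)}(x)}{x^{i}}.$$
After factoring out $f(x)$, the right-hand side becomes a double sum in which one must identify the coefficient of each power $\ln^{n-k}(x)$ for $k=0,\dots,n$.

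The core combinatorial step is the reindexing. The first term $f^{(n-1)}(x)\ln(x)$ contributes, via the inductive hypothesis, the polynomial $\sum_{k=0}^{n-1}\del{n-1}{k}\ln^{n-k}(x)$. In the double sum coming from the second piece, the exponent of $\ln(x)$ inside the inner summation is $n-1-i-j$; setting $n-1-i-j=n-k$ forces $j=k-1-i$, which is legitimate exactly when $1\leq k\leq n$ and $0\leq i\leq k-1$. Collecting everything, the coefficient of $\ln^{n-k}(x)$ becomes
$$\del{n-1}{k}+\sum_{i=0}^{k-1}\oli{n}{i}\frac{1}{x^{i}}\del{n-1-i}{k-1-i},$$
which is exactly $\del{n}{k}$ by Definition \ref{delta01}.

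Three boundary checks complete the argument: for $k=0$ the double sum contributes nothing and $\del{n-1}{0}=1=\del{n}{0}$; for $1\leq k\leq n-1$ the identification above is direct; and for $k=n$ the first summand vanishes since $\del{n-1}{n}=0$, leaving precisely the recursive expression that defines $\del{n}{n}$. The main obstacle is purely bookkeeping, namely ensuring that the index substitution $j\mapsto k-1-i$ respects the ranges $0\leq j\leq n-1-i$ and that the extremal cases $k=0$ and $k=n$ are handled cleanly by the convention $\del{n}{k}=0$ for $k>n$; no further ideas beyond the inductive hypothesis and the two recursions are needed.
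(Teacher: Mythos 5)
Your proposal is correct and follows essentially the same route as the paper: strong induction, substitution of the inductive hypothesis into the recursion of Proposition \ref{oli02}, regrouping by powers of $\ln(x)$, and identification of the resulting coefficients with the recursive definition of $\del{n}{k}$, including the same boundary checks at $k=0$ and $k=n$. The only difference is cosmetic bookkeeping (the paper separates the $i=0$ term into a block $A$ and works one index higher, from $f^{(n)}$ to $f^{(n+1)}$), so no further comment is needed.
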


\begin{proof} 
As in the previous proposition, by induction. If $n=1$, according to (\ref{eq03}) we get 
$$f'(x)=f(x)\left( \del{1}{0}\ln(x)+\del{1}{1}\right)$$
which is right due to $\del{1}{0}=\del{1}{1}=1$.\\
Let's suppose now that (\ref{eq03}) is right for $1,\dots,n$. From proposition \ref{oli02} we know
$$f^{(n+1)}(x)=f^{(n)}(x)\ln(x)+\sum_{i=0}^{n}\oli{n+1}{i}\frac{1}{x^i}f^{(n-i)}(x)$$
$$=\underbrace{f^{(n)}(x)\left[ \ln(x)+\oli{n+1}{0} \right] }_{A}+\underbrace{\sum_{i=1}^{n}\oli{n+1}{i}\frac{1}{x^i}f^{(n-i)}(x)}_{B}$$
On one hand, because of the induction hypothesis,
$$A=\left[ \ln(x)+\oli{n+1}{0} \right]f(x)\sum_{j=0}^{n}\del{n}{j}\ln^{n-j}(x)$$
$$=f(x)\left[\sum_{j=0}^{n}\del{n}{j}\ln^{n+1-j}(x)+\sum_{j=0}^{n}\del{n}{j}\ln^{n-j}(x)\oli{n+1}{0}\right]$$
$$=f(x)\left[\del{n}{0}\ln^{n+1}(x)+\del{n}{n}\oli{n+1}{0}\right.$$
$$+\left.\sum_{j=1}^{n}\del{n}{j}\ln^{n+1-j}(x)+\sum_{j=0}^{n-1}\del{n}{j}\ln^{n-j}(x)\oli{n+1}{0}\right]$$
$$=f(x)\left[\del{n}{0}\ln^{n+1}(x)+\del{n}{n}\oli{n+1}{0}\right.$$
$$+\left.\sum_{j=0}^{n-1}\del{n}{j+1}\ln^{n-j}(x)+\sum_{j=0}^{n-1}\del{n}{j}\ln^{n-j}(x)\oli{n+1}{0}\right]$$
$$=f(x)\left[\del{n}{0}\ln^{n+1}(x)+\del{n}{n}\oli{n+1}{0}\right.$$
$$+\left.\sum_{j=0}^{n-1}\left(\del{n}{j+1}+\oli{n+1}{0}\del{n}{j}\right)\ln^{n-j}(x)\right]$$
On the other hand,
$$B=\sum_{i=1}^{n}\oli{n+1}{i}\frac{1}{x^i}\sum_{j=0}^{n-i}\del{n-i}{j}f(x)\ln^{n-i-j}(x)$$
$$=\sum_{i=1}^{n}\sum_{j=0}^{n-i}\oli{n+1}{i}\frac{1}{x^i}\del{n-i}{j}f(x)\ln^{n-i-j}(x)$$
now, rearranging the summands of $B$ according to their degree $g$ in $\ln^g(x)$,
$$B=\sum_{g=0}^{n-1}\sum_{h=1}^{n-g}\oli{n+1}{h}\frac{1}{x^h}\del{n-h}{n-g-h}f(x)\ln^g(x)$$
With these arrangements, we can sum again $A$ and $B$:
$$A+B=\del{n}{0}f(x)\ln^{n+1}(x)+\oli{n+1}{0}\del{n}{n}f(x)$$
$$+\sum_{j=0}^{n-1}\left[ \del{n}{j+1}+\oli{n+1}{0}\del{n}{j} \right]f(x)\ln^{n-j}(x)$$
$$+\sum_{g=0}^{n-1}f(x)\ln^g(x)\sum_{h=1}^{n-g}\oli{n+1}{h}\frac{1}{x^h}\del{n-h}{n-g-h}$$
$$=\del{n}{0}f(x)\ln^{n+1}(x)+f(x)\sum_{h=0}^{n}\oli{n+1}{h}\frac{1}{x^h}\del{n-h}{n-h}$$
$$+\sum_{g=1}^{n-1}f(x)\ln^g(x)\sum_{h=1}^{n-g}\oli{n+1}{h}\frac{1}{x^h}\del{n-h}{n-g-h}$$
$$+\sum_{j=0}^{n-1}\left[ \del{n}{j+1}+\oli{n+1}{0}\del{n}{j} \right]f(x)\ln^{n-j}(x)$$
With the change $n-j=g$, $A+B$ can be writen as follows
$$A+B=\del{n}{0}f(x)\ln^{n+1}(x)+f(x)\sum_{h=0}^{n}\oli{n+1}{h}\frac{1}{x^h}\del{n-h}{n-h}$$
$$+\sum_{j=1}^{n-1}\left[ \del{n}{j+1}+\sum_{h=0}^{j}\oli{n+1}{h}\frac{1}{x^h}\del{n-h}{j-h} \right]f(x)\ln^{n-j}(x)$$
$$+\left[ \del{n}{1}+\oli{n+1}{0}\del{n}{0} \right]f(x)\ln^{n}(x)$$
which rearranged is
$$A+B=\del{n}{0}f(x)\ln^{n+1}(x)+
\left[ \del{n}{1}+\oli{n+1}{0}\del{n}{0} \right]f(x)\ln^{n}(x)+$$
$$\sum_{j=1}^{n-1}\left[ \del{n}{j+1}+\sum_{h=0}^{j}\oli{n+1}{h}\frac{1}{x^h}\del{n-h}{j-h} \right]f(x)\ln^{n-j}(x)+$$
$$f(x)\left[\sum_{h=0}^{n}\oli{n+1}{h}\frac{1}{x^h}\del{n-h}{n-h}\right]$$
Finally, due to
$$\del{n+1}{i}=\del{n}{i}+\sum_{h=0}^{i-1}\oli{n+1}{h}\frac{1}{x^h}\del{n-h}{i-1-h}$$
and 
$$\del{n+1}{0}=\del{n}{0}$$
the adjustment of the expression can be completed: 
$$A+B=\del{n+1}{0}f(x)\ln^{n+1}(x)+\del{n+1}{1}f(x)\ln^n(x)$$
$$+\sum_{j=1}^{n-1}\del{n+1}{j+1}\ln^{n-j}(x)+\del{n+1}{n+1}$$
$$=\del{n+1}{0}f(x)\ln^{n+1}(x)+\del{n+1}{1}f(x)\ln^n(x)$$
$$+\sum_{j=2}^{n}\del{n+1}{j}\ln^{n+1-j}(x)+\del{n+1}{n+1}$$
$$=\sum_{j=0}^{n+1}\del{n+1}{j}\ln^{n+1-j}(x)$$

\end{proof}

\section{Series expansion of $x^x$}\label{sec03}
In secci\'on \ref{sec02} the derivatives of $f(x)=x^x$ has been determined such a way they only depend on the series $\del{n}{k}$ and the function $f(x)$ itself. This characterization allows us to write the series expansion of $x^x$ in the same terms:
\begin{equation}\label{eq04}
f(x)=f(a)\sum_{k=0}^{\infty}\frac{(x-a)^k}{k!}\sum_{i=0}^{k}\dela{k}{i}\ln^{k-i}(a),\; a\in\mathbb{R}
\end{equation}
The particular case $a=1$ gives a very simplified expansion series. As $\ln^{k-i}(1)=0$ if $i<k$, every term except one are $0$. Besides, $f(1)=1$, so the result is
\begin{equation}\label{eq05}
f(x)=\sum_{k=0}^{\infty}\frac{(x-1)^k}{k!}\delx{k}{k}{1}
\end{equation}
In subsection \ref{sec04.03} we will get a more accurate expression of $\del{n}{k}$. This fact will allow us to rewrite the expansion series of $f$.
\begin{figure}
\label{serie}
$$\begin{array}{||c||c|c|c||}
\hline\hline
n\setminus x&0.5&0.9&2\\
\hline\hline
5&0.7057292&0.9095325&3.916667\\
10&0.7070978&0.9095326&4.005655\\
20&0.7071066&0.9095326&3.997326\\
\hline\hline
\end{array}$$
\caption{Few computations of the expansion series (\ref{eq05}) by sum from $k=0$ to $n$ for several values of $n$ and  $x$.}
\end{figure}

\section{Properties of numbers $\olipt$}\label{sec04}
According with the definition of $\olipt$ in section \ref{sec01}, several specific values of them can be computed easily, for instance
$$\oli{n}{1}=\oli{n-1}{1}+1=\oli{n-2}{1}+2=\cdots=\oli{1}{1}+n-1=n-1$$
but it would be interesting to get an expresi\'on allowing their computation directly. We will see below that studying sums of the numbers $\oli{a}{b}$ for any $b$ fixed, allows us to find this expression. It will be the main result in  subsecci\'on \ref{sec04.01}.\\
In \ref{sec04.02} we will see the relation between numbers $\olipt$ and rencontres numbers.  Finally, in subsection \ref{sec04.03} will be shown two examples of use for $\olipt$ . These examples are two families of polynomials. The first of them arises from a differential equation with the restriction of polynomials as solutions. The second family are polynomials that allow to write $\del{n}{k}$ in a more accurated way.  
\begin{figure}
\centering
\begin{tabular}{||c|ccccccccc||}
\hline\hline
$\oli{a}{b}$&$b=0$&$b=1$&$b=2$&$b=3$&$b=4$&$b=5$&$b=6$&$b=7$&$b=8$\\
\hline
$a=1$&1&0&0&0&0&0&0&0&0\\
$a=2$&1&1&0&0&0&0&0&0&0\\
$a=3$&1&2&-1&0&0&0&0&0&0\\
$a=4$&1&3&-3&2&0&0&0&0&0\\
$a=5$&1&4&-6&8&-6&0&0&0&0\\
$a=6$&1&5&-10&20&-30&24&0&0&0\\
$a=7$&1&6&-15&40&-90&144&-120&0&0\\
$a=8$&1&7&-21&70&-210&504&-840&720&0\\
$a=9$&1&8&-28&112&-420&1344&-3360&5760&-5040\\
\hline\hline
\end{tabular}
\caption{Values $\oli{a}{b}$ with $a=1,...,9$ and $b=0,...,8$}
\end{figure}

\subsection{Partial sums of numbers $\olipt$}\label{sec04.01}
\begin{definition} For any fixed $i\in\mathbb{Z}_{\geq 0}$, the partial sum of $n$ terms $\olipt$ is defined by
\begin{equation}
S_i(n)=\sum_{j=1}^{n}\oli{j}{i}=\sum_{j=i+1}^{n}\oli{j}{i}
\end{equation}
\end{definition}
$S_0(n)$ is easy to compute due to $\oli{a}{0}=1$ for all $a\in\mathbb{Z}_{>1}$, and so $S_0(n)=n$. $S_1(n)$ is also easy to compute:
$$S_1(n)=\sum_{j=2}^{n}\oli{j}{1}=\sum_{j=2}^{n}(j-1)=\sum_{j=1}^{n-1}j=\frac{(n-1)n}{2}$$
Again, these sums keep a recursive relationship:

\begin{lemma}\label{sumas01}
Partial sums $S_i(n)$ satisfy:
\begin{equation}\label{eq06}
S_i(n)=S_i(n-1)-(i-1)S_{i-1}(n-1)
\end{equation}
\end{lemma}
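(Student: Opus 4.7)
The plan is to expand $S_i(n)$ by its definition and apply the recursive rule for $\Omega(a,b)$ termwise, then re-index and collapse the resulting sums.

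First I would write
$$S_i(n) = \sum_{j=i+1}^{n} \oli{j}{i}$$
and, for $i \geq 2$, invoke the defining recursion $\oli{j}{i} = \oli{j-1}{i} - (i-1)\oli{j-1}{i-1}$, which is valid throughout the range $j = i+1, \dots, n$ because we then have $i = j-1$ or $i < j-1$, i.e. $i$ always lies in the interval $\{2,\dots,j-1\}$ required by Definition \ref{oli01}. Substituting gives
$$S_i(n) = \sum_{j=i+1}^{n}\oli{j-1}{i} \;-\; (i-1)\sum_{j=i+1}^{n}\oli{j-1}{i-1}.$$

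Next I would re-index both sums by $k = j-1$, so each runs from $k = i$ to $k = n-1$. The first sum is $\sum_{k=i}^{n-1}\oli{k}{i}$; because $\oli{i}{i}=0$ (the coefficient vanishes whenever the second index is not strictly smaller than the first), this is exactly $\sum_{k=i+1}^{n-1}\oli{k}{i} = S_i(n-1)$. The second sum is $\sum_{k=i}^{n-1}\oli{k}{i-1}$, and since $\oli{k}{i-1}=0$ for $k < i$, it coincides with $\sum_{k=1}^{n-1}\oli{k}{i-1} = S_{i-1}(n-1)$. Putting the two identifications together yields the claim.

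There is no real obstacle here; the only delicate point is bookkeeping at the lower endpoint $j = i+1$, where the recursion for $\olipt$ produces the term $\oli{i}{i}$, and one must check that this vanishes so the re-indexed sum cleanly matches $S_i(n-1)$. The cases $i = 0$ and $i = 1$ are trivial (since $S_0$ is explicit and the factor $i-1$ kills the second term when $i = 1$, reducing the statement to the direct identity $S_1(n) = S_1(n-1) + \oli{n}{1}$ combined with $\oli{n}{1} = n-1$), so the inductive content really sits in $i \geq 2$.
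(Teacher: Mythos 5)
Your main computation for $i\geq 2$ is essentially identical to the paper's proof: expand $S_i(n)$ by its definition, apply the recursion $\oli{j}{i}=\oli{j-1}{i}-(i-1)\oli{j-1}{i-1}$ termwise, split and re-index the two sums, and use $\oli{i}{i}=0$ to match the first sum with $S_i(n-1)$ and the second with $S_{i-1}(n-1)$. That part is fine, and you are more careful than the paper about why the boundary term at $j=i+1$ disappears.

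The parenthetical claim that the cases $i=0$ and $i=1$ are ``trivial,'' however, is wrong. For $i=1$ the asserted identity (\ref{eq06}) reads $S_1(n)=S_1(n-1)-0\cdot S_0(n-1)=S_1(n-1)$, whereas in fact $S_1(n)=S_1(n-1)+\oli{n}{1}=S_1(n-1)+(n-1)$; for example $S_1(4)=6\neq 3=S_1(3)$. Your own sentence exhibits the contradiction: you cannot both kill the second term and add $\oli{n}{1}=n-1\neq 0$. For $i=0$ the right-hand side involves $S_{-1}(n-1)$, which is not defined. The correct conclusion is that the lemma holds only for $i\geq 2$ (which is where the $\olipt$ recursion you invoke is actually defined, and is the only range in which the lemma is used later, in the proof of Lemma \ref{sum1}); the paper leaves this restriction implicit, but you should state it rather than assert that the small cases go through.
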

\begin{proof} 
By the definition of $\olipt$, $S_i(n)$ can be writen as follows
$$S_i(n)=\sum_{j=i+1}^{n}\oli{j}{i}$$
$$=\sum_{j=i+1}^{n}\left( \oli{j-1}{i}-(i-1)\oli{j-1}{i-1}\right)$$
$$=\sum_{j=i+1}^{n}\oli{j-1}{i}-(i-1)\sum_{j=i+1}^{n}\oli{j-1}{i-1}$$
$$=\sum_{j=i+2}^{n}\oli{j-1}{i}-(i-1)\sum_{j=i+1}^{n}\oli{j-1}{i-1}$$
$$=\sum_{j=i+1}^{n-1}\oli{j}{i}-(i-1)\sum_{j=i}^{n-1}\oli{j}{i-1}$$
concluding
$$S_i(n)=S_i(n-1)-(i-1)S_{i-1}(n-1)$$
when appliying again the definici\'on of $S_i(n)$.

\end{proof}

\begin{figure}\label{taula01}
\centering
\begin{tabular}{||c|c||}
\hline\hline
$i$&$S_i(n)$\\
\hline
&\\
$0$&$n$\\
&\\
$1$&$\frac{(n-1)n}{2}$\\
&\\
$2$&$-\frac{(n-2)(n-1)n}{6}$\\
&\\
$3$&$\frac{(n-3)(n-2)(n-1)n}{12}$\\
&\\
$4$&$-\frac{(n-4)(n-3)(n-2)(n-1)n}{20}$\\
&\\
\hline\hline
\end{tabular}
\caption{$S_i(n)$ for $i$ from $0$ to $4$}
\end{figure}
The relationship among partials sums leads to the next lemma:
\begin{lemma}\label{sum1}
$$S_i(n)=(-1)^{i+1}(i-1)!\binom{n}{i+1}\mbox{ if }i\in\mathbb{Z}_{>0}$$
\end{lemma}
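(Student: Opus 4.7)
The plan is a double induction. The outer induction runs on $i$, using the recursion from Lemma \ref{sumas01}, while an inner induction on $n$ handles each fixed $i$.

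The base case $i=1$ is essentially already in the paragraph preceding the lemma: direct telescoping gives $S_1(n)=\binom{n}{2}$, which matches $(-1)^{2}\cdot 0!\cdot\binom{n}{2}$. For the inductive step, fix $i\ge 2$ and assume
$$S_{i-1}(n)=(-1)^{i}(i-2)!\binom{n}{i}$$
holds for all $n$. I would then prove the formula for $S_i$ by inner induction on $n$. The base case $n=i$ is immediate because $S_i(i)$ is an empty sum and $\binom{i}{i+1}=0$.

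For the inner inductive step, I plug the hypothesis into the recursion of Lemma \ref{sumas01}:
$$S_i(n)=S_i(n-1)-(i-1)S_{i-1}(n-1)=(-1)^{i+1}(i-1)!\binom{n-1}{i+1}-(i-1)(-1)^{i}(i-2)!\binom{n-1}{i}.$$
The second term can be rewritten as $(-1)^{i+1}(i-1)!\binom{n-1}{i}$ after absorbing the $(i-1)$ into $(i-2)!$ and flipping the sign, so after factoring $(-1)^{i+1}(i-1)!$ Pascal's identity $\binom{n-1}{i+1}+\binom{n-1}{i}=\binom{n}{i+1}$ delivers the claim.

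The only delicate point, rather than a real obstacle, is the reason the base case $i=1$ must be isolated: the recursion of Lemma \ref{sumas01} rests on the relation $\oli{a}{b}=\oli{a-1}{b}-(b-1)\oli{a-1}{b-1}$ from Definition \ref{oli01}, which is only declared for $b\ge 2$. So one cannot bootstrap the formula from $S_0(n)=n$ via that recursion. Once $i=1$ is handled by direct summation, the induction from $i-1$ to $i$ for $i\ge 2$ is pure arithmetic with binomial coefficients, and I anticipate no further difficulty.
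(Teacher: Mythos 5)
Your proof is correct, and it reaches the result by a slightly different organization of the same basic induction. Both you and the paper induct on $i$ using the recursion of Lemma \ref{sumas01}; the difference lies in how the dependence on $n$ is handled. The paper unrolls the recursion all the way down to $S_{i+1}(i+1)=0$, obtaining $S_{i+1}(n)=-i\sum_{j=i+1}^{n-1}S_i(j)$, and then evaluates the resulting sum with the hockey-stick identity $\sum_{j=i+1}^{n-1}\binom{j}{i+1}=\binom{n}{i+2}$, which it must prove separately (by induction via Pascal's identity). You instead run an inner induction on $n$, apply the recursion only once, and close with a single use of Pascal's identity $\binom{n-1}{i+1}+\binom{n-1}{i}=\binom{n}{i+1}$. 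Since the hockey-stick identity is itself just telescoped Pascal, the two arguments have the same mathematical content, but your version avoids the auxiliary identity \eqref{numcomb} entirely and is a little more self-contained. Your remark about why $i=1$ must be treated by direct summation rather than bootstrapped from $S_0$ is also well taken: the relation $\oli{a}{b}=\oli{a-1}{b}-(b-1)\oli{a-1}{b-1}$ in Definition \ref{oli01} is only stipulated for $b\ge 2$ (for $b=1$ the rule is $\oli{a}{1}=\oli{a-1}{1}+1$, which does not coincide with the $b\ge2$ recursion), a point the paper passes over by simply citing the table of values for $i=1,\dots,4$.
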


\begin{proof} 
By induction. Let's suppose 
$$S_i(n)=(-1)^{i+1}(i-1)!\binom{n}{i+1}\mbox{ if }i\in\mathbb{Z}_{>0}$$ 
(clearly right for $i=1,...,4$ as can be seen in figure \ref{taula01}). Then
$$S_{i+1}(n)=S_{i+1}(n-1)-iS_i(n-1)$$
$$=S_{i+1}(n-2)-iS_i(n-2)-iS_i(n-1)$$
$$=\cdots=S_{i+1}(i+1)-i\sum_{j=i+1}^{n-1}S_i(j)$$
and as $S_{i+1}(i+1)=0$,
$$S_{i+1}(n)=-i\sum_{j=i+1}^{n-1}S_i(j)$$
$$=-i\sum_{j=i+1}^{n-1}(-1)^{i+1}(i-1)!\binom{j}{i+1}$$
$$=\sum_{j=i+1}^{n-1}(-1)^{i+2}i!\binom{j}{i+1}$$
$$=(-1)^{i+2}i!\sum_{j=i+1}^{n-1}\binom{j}{i+1}$$
but
\begin{equation}\label{numcomb}
\sum_{j=i+1}^{n-1}\binom{j}{i+1}=\binom{n}{i+2}
\end{equation}
and therefore 
$$S_{i+1}(n)=(-1)^{i+2}i!\binom{n}{i+2}$$
To finish this proof, previous equation \ref{numcomb} must be proven:\\
If $n=i+2$ the equality is right. Let's suppose now that the equality is right from $i+2$ to $n-2$.
$$\sum_{j=i+1}^{n-1}\binom{j}{i+1}=\sum_{j=i+1}^{n-2}\binom{j}{i+1}+\binom{n-1}{i+1}$$
and by induction hypothesis,
$$\sum_{j=i+1}^{n-1}\binom{j}{i+1}=\binom{n-1}{i+2}+\binom{n-1}{i+1}$$
Finally, by Pascal's identity \cite{Kos} we obtain the result.

\end{proof}

As a consequence of lemma \ref{sum1} we can get also a more accurate expression of $\oli{a}{b}$:
\begin{proposition}\label{num1} For any $a,b\in\mathbb{Z}_{>0}$ with $a>b$, 
\begin{equation}\label{eq07}
\oli{a}{b}=(-1)^{b+1}(b-1)!\binom{a-1}{b}\mbox{ if }b\in\mathbb{Z}_{>0}
\end{equation}
\end{proposition}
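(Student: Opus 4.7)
The plan is to derive the closed form for $\oli{a}{b}$ directly from Lemma \ref{sum1} via a simple telescoping argument, rather than attempting a fresh induction on the recursion in Definition \ref{oli01}. The key observation is that, by the very definition of the partial sum $S_b(n) = \sum_{j=1}^{n}\oli{j}{b}$, consecutive values of $S_b$ differ by a single term of the series:
$$\oli{a}{b} = S_b(a) - S_b(a-1).$$
Since the hypothesis $b \in \mathbb{Z}_{>0}$ matches that of Lemma \ref{sum1}, I can substitute the closed form for $S_b$ on both sides.

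After substitution, the proposition reduces to
$$\oli{a}{b} = (-1)^{b+1}(b-1)!\left[\binom{a}{b+1} - \binom{a-1}{b+1}\right],$$
and Pascal's identity $\binom{a}{b+1} = \binom{a-1}{b+1} + \binom{a-1}{b}$ collapses the bracket to $\binom{a-1}{b}$, yielding precisely the claimed formula.

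There is essentially no serious obstacle, since the hard work has already been done in establishing Lemma \ref{sum1}. The only small sanity checks are at the boundaries: when $b = a$ the formula gives zero (because $\binom{a-1}{a} = 0$), matching the tabulated values, and when $b > a$ the binomial also vanishes, consistent with the convention $\oli{a}{b}=0$ for $b > a$. An alternative route would be a direct double induction on $a$ and $b$ using the recursion of Definition \ref{oli01}, but that would essentially re-derive Lemma \ref{sum1} in disguise and is strictly more laborious; the telescoping approach makes optimal use of what has already been proved.
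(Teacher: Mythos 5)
Your proof is correct, and it reaches the formula by a genuinely different (and shorter) route than the paper's. The paper first treats $b=1$ separately, then for $b>1$ unrolls the recursion $\oli{a}{b}=\oli{a-1}{b}-(b-1)\oli{a-1}{b-1}$ down to $\oli{b}{b}=0$, obtaining $\oli{a}{b}=-(b-1)S_{b-1}(a-1)$, and only then invokes Lemma \ref{sum1} at index $b-1$. You instead use the telescoping identity $\oli{a}{b}=S_b(a)-S_b(a-1)$, apply Lemma \ref{sum1} twice at index $b$ itself, and finish with Pascal's identity. Both arguments rest entirely on Lemma \ref{sum1}, and neither is circular, since that lemma is proved from the recursion for the partial sums independently of Proposition \ref{num1}. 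Your version is more uniform: no separate base case $b=1$ is needed because the lemma already covers $i=1$, and $S_b(a-1)$ is still correctly given by the lemma even at the boundary $a-1=b$, where both sides vanish. What the paper's longer route buys is the intermediate identity $\oli{a}{b}=-(b-1)S_{b-1}(a-1)$, which it then records as Corollary \ref{cor1}; under your approach that corollary must be re-derived from the closed forms (which is in fact exactly how the paper proves it anyway). One marginal note: your side remark that the case $b=a$ "gives zero, matching the tabulated values" is a consistency check outside the stated hypothesis $a>b$, not part of the proof, and is harmless.
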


\begin{proof} 
If $b=1$,
$$(-1)^2 1!\binom{a-1}{1}=a-1=\oli{a}{1}$$
If $b>1$,
$$\oli{a}{b}=\oli{a-1}{b}-(b-1)\oli{a-1}{b-1}$$
$$=\oli{a-2}{b}-(b-1)\oli{a-2}{b-1}-(b-1)\oli{a-1}{b-1}$$
$$=\cdots=\oli{b}{b}-(b-1)\oli{b}{b-1}-\cdots-(b-1)\oli{a-1}{b-1}$$
and as $\oli{b}{b}=0$, 
$$\oli{a}{b}=-(b-1)\sum_{j=b}^{a-1}\oli{j}{b-1}=-(b-1)S_{b-1}(a-1)$$
Finally, using lemma \ref{sum1},
$$\oli{a}{b}=-(b-1)(-1)^b (b-1)!\binom{a-1}{b}$$
$$=(-1)^{b+1}(b-1)!\binom{a-1}{b}$$

\end{proof}

Formula \ref{eq07} gives a recursive way for the computation of $\oli{a}{0}=1$, but in a different way than in definition \ref{oli01}:
\begin{corollary}\label{cor1}
\begin{equation}
\oli{a}{b}=-(b-1)\sum_{j=b}^{a-1}\oli{j}{b-1}\mbox{ if }b\in\mathbb{Z}_{>0}
\end{equation}
\end{corollary}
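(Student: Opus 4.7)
The statement is essentially a byproduct of the proof of Proposition \ref{num1}, which already passes through the identity $\oli{a}{b}=-(b-1)S_{b-1}(a-1)$ on its way to the closed form in terms of binomial coefficients. So my plan is to isolate that intermediate step and present it as a standalone argument, independent of Lemma \ref{sum1}.

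The approach is a direct telescoping of the defining recursion. First I would apply the recursive definition once,
\[
\oli{a}{b}=\oli{a-1}{b}-(b-1)\oli{a-1}{b-1},
\]
then apply it again to the term $\oli{a-1}{b}$, and iterate. Each unfolding peels off one summand of the form $-(b-1)\oli{j}{b-1}$ while the ``principal'' term $\oli{a-k}{b}$ decreases its first index. After $a-b$ such steps the principal term becomes $\oli{b}{b}$, which by Definition \ref{oli01} vanishes (since $b>b$ is false but the case $b=a$ is not covered by the middle branch; one must check that $\oli{b}{b}=0$ from the fourth branch is applied by extending the recursion on the boundary, or equivalently from $\oli{b}{b}=\oli{b-1}{b}-(b-1)\oli{b-1}{b-1}=0-(b-1)\oli{b-1}{b-1}$, in which case the same telescoping still works with an adjusted initial term). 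Collecting the accumulated summands yields
\[
\oli{a}{b}=-(b-1)\sum_{j=b}^{a-1}\oli{j}{b-1},
\]
which is exactly the claim.

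Alternatively, a cleaner presentation is by induction on $a$ (for $b$ fixed): the base case $a=b+1$ reduces to $\oli{b+1}{b}=-(b-1)\oli{b}{b-1}$, which is an immediate instance of the recursion since $\oli{b}{b}=0$. For the inductive step, apply the recursion once to $\oli{a+1}{b}$ and then use the induction hypothesis on $\oli{a}{b}$ to rewrite it as a sum, absorbing the extra term $-(b-1)\oli{a}{b-1}$ to extend the summation upper limit from $a-1$ to $a$.

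The main (minor) obstacle is purely bookkeeping: making sure the boundary $\oli{b}{b}=0$ is invoked correctly and that the index range of the resulting sum is $[b,a-1]$ and not off by one. Since the telescoping is mechanical and does not rely on Lemma \ref{sum1} or Proposition \ref{num1}, the corollary can be stated and proved immediately after Definition \ref{oli01}; its placement here simply emphasizes that it provides an alternative recursion equivalent to the original one, but grounded on values $\oli{\cdot}{b-1}$ summed over a range rather than the two-term recurrence in Definition \ref{oli01}.
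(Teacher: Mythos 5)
Your argument is correct, but it is a genuinely different route from the paper's own proof of this corollary. The paper derives the identity \emph{backwards from the closed form}: it starts from Proposition \ref{num1}, factors $\oli{a}{b}=(-1)^{b+1}(b-1)!\binom{a-1}{b}=-(b-1)(-1)^{b}(b-2)!\binom{a-1}{b}$, recognizes the second factor as $S_{b-1}(a-1)$ via Lemma \ref{sum1}, and then unpacks the definition of $S_{b-1}$ to obtain the sum. Your telescoping (equivalently, your induction on $a$ for fixed $b$) uses only Definition \ref{oli01}, and --- as you correctly observe --- it is literally the first half of the paper's proof of Proposition \ref{num1}, where $\oli{a}{b}=-(b-1)\sum_{j=b}^{a-1}\oli{j}{b-1}=-(b-1)S_{b-1}(a-1)$ is already established before Lemma \ref{sum1} is ever invoked. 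What your version buys is logical independence from Lemma \ref{sum1} and Proposition \ref{num1}, so the corollary could indeed be stated and proved immediately after Definition \ref{oli01}; what the paper's version buys is a two-line verification given the machinery already in place. Your attention to the boundary value $\oli{b}{b}=0$ is warranted: the branches of Definition \ref{oli01} omit the case $a=b$, and the paper silently assumes $\oli{b}{b}=0$ in the proof of Proposition \ref{num1} as well, so your argument is no worse off than the paper's on this point. One caveat applies equally to both arguments and really to the statement itself: for $b=1$ the right-hand side is identically zero while $\oli{a}{1}=a-1$, because the defining recursion for $b=1$ is $\oli{a}{1}=\oli{a-1}{1}+1$ rather than the two-term recurrence you telescope; so the corollary (and your base case $\oli{b+1}{b}=-(b-1)\oli{b}{b-1}$) should be restricted to $b\geq 2$, just as the paper's own derivation implicitly requires via the factor $(b-2)!$.
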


\begin{proof} 
Just using lemma \ref{sum1} and proposition \ref{num1}:
$$\oli{a}{b}=(-1)^{b+1}(b-1)!\binom{a-1}{b}$$
$$=-(b-1)(-1)^b (b-2)!\binom{a-1}{b}$$
$$=-(b-1)S_{b-1}(a-1)$$
$$=-(b-1)\sum_{j=b}^{a-1}\oli{j}{b-1}$$

\end{proof}

\subsection{Mapping between the numbers $\olipt$ and rencontres numbers}\label{sec04.02}
Rencontres numbers count how many permutations in a set $\{1,2,\dots,n\}$ has certain quantity of fixed points, i.e., how many derangements (partial o total). Specifically $D_{n,k}$ is the quantity of permutations of $n$ items with $k$ of them fixed. Rencontres numbers can be computed by
$$\begin{cases} D_{n,k}=\binom{a}{b}D_{n-k}\\D_{n}=n!\sum_{i=0}^{n}\frac{(-1)^i}{i!}\end{cases}$$
(see \cite{Rio}), which allows you to check that numbers $\olipt$ match with rencontres numbers except for a multiplicative factor.
\begin{proposition}\label{ren1}
For $n$ and $k$ non-negative integer numbers given with $n>k$, 
$$\oli{n}{k}=\lambda_k D_{n-1,n-k-1}$$
where $\lambda_k=\left(k\sum_{i=0}^{k}\frac{(-1)^{i+k+1}}{i!}\right)^{-1}$.\\
Equivalently,
$$D_{n,k}=(n-k)\sum_{i=0}^{n-k}\frac{(-1)^{i+n-k+1}}{i!}\oli{n+1}{n-k}$$
Note: $\lambda_k$ only depends on $k$ (as the notation indicates).
\end{proposition}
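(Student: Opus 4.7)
The plan is to verify the identity by substituting the closed-form expressions from Proposition \ref{num1} for $\oli{n}{k}$ and the standard factorisation $D_{n,k}=\binom{n}{k}D_{n-k}$ for rencontres numbers, and then observe that the resulting ratio is independent of $n$; that common value is, by construction, $\lambda_k$.

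Concretely, I would first dispose of the edge case $k=0$ by inspection: $\oli{n}{0}=1$ and $D_{n-1,n-1}=D_0=1$, so both sides agree trivially, but this case strictly falls outside the stated formula for $\lambda_k$ because of the factor $k$ in the denominator, so it should be flagged. For $k\geq 1$, Proposition \ref{num1} gives
$$\oli{n}{k}=(-1)^{k+1}(k-1)!\binom{n-1}{k},$$
while on the rencontres side,
$$D_{n-1,n-k-1}=\binom{n-1}{n-k-1}\,D_{k}=\binom{n-1}{k}\,k!\sum_{i=0}^{k}\frac{(-1)^{i}}{i!},$$
using the symmetry $\binom{n-1}{n-k-1}=\binom{n-1}{k}$ together with the explicit form of $D_k$. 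Dividing one expression by the other, the $\binom{n-1}{k}$ factors cancel and the $n$-dependence disappears, leaving
$$\frac{\oli{n}{k}}{D_{n-1,n-k-1}}=\frac{(-1)^{k+1}(k-1)!}{k!\sum_{i=0}^{k}\frac{(-1)^i}{i!}}=\frac{1}{k\sum_{i=0}^{k}\frac{(-1)^{i+k+1}}{i!}},$$
where the last equality just absorbs the global sign $(-1)^{k+1}$ into each term of the sum (using $(-1)^{k+1}\cdot(-1)^{i}=(-1)^{i+k+1}$). This is exactly $\lambda_k$, so the first identity follows.

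For the equivalent second formula I would simply reindex: applying the identity $\oli{n}{k}=\lambda_k D_{n-1,n-k-1}$ with $n$ replaced by $n+1$ and $k$ replaced by $n-k$ yields $\oli{n+1}{n-k}=\lambda_{n-k}D_{n,k}$, and solving for $D_{n,k}$ and expanding $\lambda_{n-k}^{-1}$ reproduces the displayed expression. I do not expect any real obstacle, since the argument is a direct algebraic verification once Proposition \ref{num1} and the classical identity $D_{n,k}=\binom{n}{k}D_{n-k}$ are in hand; the only items requiring attention are the careful bookkeeping of the sign when it is pushed inside the sum and the separate treatment of $k=0$.
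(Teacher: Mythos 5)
Your proposal is correct and follows essentially the same route as the paper: both compute the ratio $\oli{n}{k}/D_{n-1,n-k-1}$ using the closed form from Proposition \ref{num1} and the factorisation $D_{n,k}=\binom{n}{k}D_{n-k}$, cancel the binomial coefficients, and observe the result depends only on $k$. Your explicit flagging of the degenerate case $k=0$ (where the stated $\lambda_k$ involves division by zero) is a small but worthwhile addition the paper omits.
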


\begin{proof} 
Just propose equality
$$\oli{n}{k}=\lambda D_{n-1,n-k-1}$$ 
and find $\lambda$:
$$\lambda=\frac{ \oli{n}{k}}{D_{n-1,n-k-1}}$$
$$=\frac{(-1)^{k+1}(k-1)!\binom{n-1}{k}}{\binom{n-1}{n-k-1}k!\sum_{i=0}^{k}\frac{(-1)^i}{i!}}$$
$$=\frac{(-1)^{k+1}}{k\sum_{i=0}^{k}\frac{(-1)^i}{i!}}$$
$$=\left(k\sum_{i=0}^{k}\frac{(-1)^{i+k+1}}{i!}\right)^{-1}$$
and as $\lambda$ only depends on $k$, we can conclude $\lambda_k=\lambda$.

\end{proof}

\subsection{Polynomials with coefficients $\olipt$}\label{sec04.03}
In this section we will see two families of polynomials with numbers $\oli{a}{b}$ as coefficients. In the first example consider the family $P_n(x),\;n\geq 1$ of monic polynomials with degree $n-1$ satisfying the equation 
$$\frac{\partial P_n}{\partial x}(x)=(n-1)P_{n-1}(x)$$
The proposition below determines how these polynomials $P_n(x)$ are:
\begin{proposition} \label{pol01}
$$P_n(x)=\sum_{i=0}^{n-1}\oli{n}{i}x^{n-1-i}\mbox{ }\forall n\in\mathbb{N}$$
\end{proposition}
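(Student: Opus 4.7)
The plan is to verify the claim by induction on $n$, reducing the statement about polynomial families to a single algebraic identity among the numbers $\Omega(n,i)$. Let $Q_n(x) = \sum_{i=0}^{n-1} \Omega(n,i)\, x^{n-1-i}$. Since $\Omega(n,0) = 1$, the polynomial $Q_n$ is automatically monic of degree $n-1$. What remains is to show $Q_n'(x) = (n-1)\, Q_{n-1}(x)$, which by the induction hypothesis identifies $Q_{n-1}$ with $P_{n-1}$ and then forces $Q_n$ to be (a member of) the family $P_n$. The base case $n = 1$ is immediate since $Q_1(x) = \Omega(1,0) = 1$.

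For the inductive step, I would differentiate term-by-term to get
\[
Q_n'(x) = \sum_{i=0}^{n-2} (n-1-i)\,\Omega(n,i)\, x^{n-2-i},
\]
and compare this to $(n-1)\, Q_{n-1}(x) = \sum_{i=0}^{n-2} (n-1)\,\Omega(n-1,i)\, x^{n-2-i}$. Matching coefficients reduces the whole proposition to the single identity
\[
(n-1-i)\, \Omega(n,i) \;=\; (n-1)\, \Omega(n-1,i), \qquad 0 \le i \le n-2.
\]
This is the key step. I would prove it by invoking the closed form $\Omega(a,b) = (-1)^{b+1}(b-1)!\binom{a-1}{b}$ from Proposition \ref{num1}: the case $i = 0$ is trivial, and for $i \ge 1$ the identity boils down to the elementary binomial relation $(n-1-i)\binom{n-1}{i} = (n-1)\binom{n-2}{i}$, verified by expanding both sides as factorials.

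The main subtlety, more than an obstacle, is uniqueness: the hypotheses on $P_n$ (monic, degree $n-1$, plus the ODE $P_n' = (n-1)P_{n-1}$) only constrain the non-constant coefficients, leaving the constant term of each $P_n$ free to choose. So the induction as laid out above really demonstrates that $Q_n$ lies in the family satisfying the stated conditions; equality $Q_n = P_n$ additionally requires the tacit initial condition $P_n(0) = \Omega(n,n-1)$, which I would read as the natural convention being invoked by the proposition. With that convention the constant term supplied by the formula is automatically the correct one, and the induction closes without further work.
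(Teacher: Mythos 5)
Your proof is correct and follows essentially the same route as the paper: induction on $n$, reduction to the single coefficient identity $(n-1-i)\oli{n}{i}=(n-1)\oli{n-1}{i}$, and verification of that identity via the closed form of Proposition \ref{num1} --- the paper merely runs the computation in the opposite direction, solving for the unknown coefficients $a_{n,i}$ rather than checking that the candidate polynomial satisfies the recurrence. Your remark about the undetermined constant term is well taken and applies equally to the paper's own argument, which only pins down $a_{n,i}$ for $i\le n-2$ and tacitly assumes the normalization $P_n(0)=\oli{n}{n-1}$.
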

\begin{proof} 
Let $P_n(x)=\sum_{i=0}^{n-1}a_{n,i}x^{n-1-i}$ with $a_{n,i}\in \mathbb{R}$ and $a_{n,0}=1$ be the solution of the equation. By definition $P_1(x)=1=\oli{1}{0}$. Suppose that the proposition is right up to $n-1$.
Deriving $P_n$ one gets
$$\frac{\partial P_n(x)}{\partial x}=\sum_{i=0}^{n-1}(n-1-i)a_{n,i}x^{n-2-i}$$
$$=\sum_{i=0}^{n-2}(n-1-i)a_{n,i}x^{n-2-i}$$
but by definition of $P_n$ and induction hypothesis we have
$$\frac{\partial P_n}{\partial x}(x)=(n-1)P_{n-1}(x)$$
$$=(n-1)\sum_{i=0}^{n-2}\oli{n-1}{i}x^{n-2-i}$$
Equalizing both expressions we get the equations 
$$(n-1-i)a_{n,i}=(n-1)\oli{n-1}{i}\;,i=0,\dots,n-2$$
By proposition \ref{num1}, these equations can be writen
$$a_{n,i}=\frac{n-1}{n-1-i}(-1)^{i+1}(i-1)!\binom{n-2}{i}$$
$$=\frac{(n-1)(n-2)!}{(n-1-i)i!(n-2-i)!}(-1)^{i+1}(i-1)!\binom{n-2}{i}$$
$$=\frac{(n-1)!}{i!(n-1-i)!}(-1)^{i+1}(i-1)!\binom{n-2}{i}$$
$$=(-1)^{i+1}(i-1)!\binom{n-1}{i}=\oli{n}{i}$$

\end{proof}

More in general we can write the relationship between polynomials $P_n(x)$ and their derivatives:
\begin{proposition}\label{pol02}
for $0<r\leq k$ given,
$$\frac{\partial^k}{\partial x^k}P_n(x)=(-1)^{r+1}r\oli{n}{r}\frac{\partial^{k-r}}{\partial x^{k-r}}P_{n-r}(x)$$
\end{proposition}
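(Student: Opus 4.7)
The plan is to iterate the first-order identity from Proposition \ref{pol01} and then identify the resulting coefficient via Proposition \ref{num1}. Since the statement decomposes the $k$-th derivative as ``$r$ derivatives, then $k-r$ more derivatives,'' the natural approach is to handle the first $r$ derivatives (which transform $P_n$ into $P_{n-r}$ up to a scalar) and leave the remaining $k-r$ derivatives formally acting on $P_{n-r}(x)$.

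First I would prove, by induction on $r$, the auxiliary identity
$$\frac{\partial^r}{\partial x^r}P_n(x)=\prod_{j=1}^{r}(n-j)\,P_{n-r}(x).$$
The base case $r=1$ is Proposition \ref{pol01}. For the inductive step, one differentiates the hypothesis once more and uses Proposition \ref{pol01} again to replace $\frac{\partial}{\partial x}P_{n-r}(x)$ by $(n-r-1)P_{n-r-1}(x)$, whence the product telescopes up to $\prod_{j=1}^{r+1}(n-j)$. This step is purely mechanical.

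Next, since $\prod_{j=1}^{r}(n-j)$ is a constant with respect to $x$, applying $\partial^{k-r}/\partial x^{k-r}$ to both sides yields
$$\frac{\partial^k}{\partial x^k}P_n(x)=\prod_{j=1}^{r}(n-j)\,\frac{\partial^{k-r}}{\partial x^{k-r}}P_{n-r}(x).$$
It then suffices to verify that $\prod_{j=1}^{r}(n-j)=(-1)^{r+1}r\,\oli{n}{r}$. By Proposition \ref{num1},
$$(-1)^{r+1}r\,\oli{n}{r}=(-1)^{r+1}r\cdot(-1)^{r+1}(r-1)!\binom{n-1}{r}=r!\binom{n-1}{r}=\frac{(n-1)!}{(n-1-r)!},$$
which is exactly $(n-1)(n-2)\cdots(n-r)=\prod_{j=1}^{r}(n-j)$.

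There is no real obstacle here: once one views Proposition \ref{pol01} as a shift operator that lowers the index of $P_n$ by one and multiplies by $n-1$, the statement reduces to recognizing the falling factorial $(n-1)(n-2)\cdots(n-r)$ inside the closed form of $\oli{n}{r}$ from Proposition \ref{num1}. The only care needed is the sign bookkeeping, where the two factors of $(-1)^{r+1}$ cancel.
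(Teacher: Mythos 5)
Your proposal is correct and follows essentially the same route as the paper: iterate the defining relation $\partial P_n/\partial x=(n-1)P_{n-1}$ to produce the falling factorial $(n-1)(n-2)\cdots(n-r)$ in front of $\partial^{k-r}P_{n-r}/\partial x^{k-r}$, then use the closed form of Proposition \ref{num1} to recognize this product as $(-1)^{r+1}r\,\oli{n}{r}$. The only cosmetic difference is that you package the iteration as a separate induction on $r$ before applying the remaining $k-r$ derivatives, whereas the paper peels derivatives off the $k$-fold derivative one at a time; the computations are identical.
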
 

\begin{proof} 
Just applying proposition \ref{pol01} as much as needed:
$$\frac{\partial^k}{\partial x^k}P_n(x)=(n-1)\frac{\partial^{k-1}}{\partial x^{k-1}}P_{n-1}(x)$$
$$=(n-1)(n-2)\frac{\partial^{k-2}}{\partial x^{k-2}}P_{n-2}(x)$$
$$=\cdots=(n-1)(n-2)\cdots (n-r)\frac{\partial^{k-r}}{\partial x^{k-r}}P_{n-r}(x)$$
We can write it in factorial terms: 
$$\frac{\partial^k}{\partial x^k}P_n(x)=\frac{(n-1)!}{(n-r-1)!}\frac{\partial^{k-r}}{\partial x^{k-r}}P_{n-r}(x)$$
which can be expressed by combinatorial numbers and use (\ref{eq07}) to get the desired result:
$$\frac{\partial^k}{\partial x^k}P_n(x)=\binom{n-1}{r}r!\frac{\partial^{k-r}}{\partial x^{k-r}}P_{n-r}(x)$$
$$=(-1)^{r+1}r\oli{n}{r}\frac{\partial^{k-r}}{\partial x^{k-r}}P_{n-r}(x)$$

\end{proof}

Next corollaries of proposition \ref{pol02} show specific cases of it, the specific cases $r=k$ and $r=k=n-1$. 
\begin{corollary}\label{pol02_01} For all $k\geq 0$,
$$\frac{\partial^{k}}{\partial x^{k}}P_n(x)=(-1)^{k+1} k\oli{n}{k}P_{n-k}(x)$$
\end{corollary}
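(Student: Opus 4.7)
The plan is to specialize Proposition \ref{pol02} to the case $r = k$. That proposition asserts, for $0 < r \leq k$, that
$$\frac{\partial^k}{\partial x^k}P_n(x)=(-1)^{r+1}r\oli{n}{r}\frac{\partial^{k-r}}{\partial x^{k-r}}P_{n-r}(x).$$
Setting $r=k$ collapses the residual operator $\frac{\partial^{k-r}}{\partial x^{k-r}}$ to $\frac{\partial^0}{\partial x^0}$, i.e.\ to the identity, so the right-hand side reduces to $(-1)^{k+1}\,k\,\oli{n}{k}\,P_{n-k}(x)$, which is exactly the claimed formula.

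I do not expect any genuine obstacle. The combinatorial and differential content has already been discharged in the proof of Proposition \ref{pol02}, and the corollary is just the evaluation of that statement at the maximal admissible value of its parameter $r$. The only items worth double-checking are bookkeeping: that the boundary $k-r=0$ is covered (it is, interpreting the zeroth derivative as the identity operator, which is implicit in the telescoping proof of Proposition \ref{pol02}), and that the constraint $0<r\leq k$ restricts the substantive content of the corollary to $k\geq 1$ (for $k=0$ the factor $k$ on the right-hand side makes the identity degenerate, and the corollary is conventionally read with $k\geq 1$).
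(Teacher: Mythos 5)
Your proposal is correct and coincides with the paper's own proof, which likewise obtains the corollary by setting $r=k$ in Proposition \ref{pol02}. Your additional remark that the statement is substantive only for $k\geq 1$ (the $k=0$ case being degenerate because of the factor $k$) is a reasonable bookkeeping observation that the paper leaves implicit.
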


\begin{proof} 
As said previously, it's just proposition \ref{pol02} when $k=r$.

\end{proof}

\begin{corollary}\label{pol02_02} For all $n\geq 1$,
$$\frac{\partial^{n-1}}{\partial x^{n-1}}P_n(x)=\Gamma(n-1)=(n-1)!$$
\end{corollary}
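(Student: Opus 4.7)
The plan is straightforward: Corollary \ref{pol02_01} already does almost all the work, and one only needs to evaluate the three factors on the right-hand side at $k=n-1$ and simplify.

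First I would specialize Corollary \ref{pol02_01} to $k=n-1$, which yields
$$\frac{\partial^{n-1}}{\partial x^{n-1}}P_n(x)=(-1)^{n}(n-1)\,\oli{n}{n-1}\,P_1(x).$$
Next I would observe that $P_1(x)=\oli{1}{0}=1$ by Proposition \ref{pol01}, so that factor simply drops out. The only remaining computation is the value of $\oli{n}{n-1}$, which I would read off from the closed form \eqref{eq07} in Proposition \ref{num1}:
$$\oli{n}{n-1}=(-1)^{n}(n-2)!\binom{n-1}{n-1}=(-1)^{n}(n-2)!.$$
Substituting back, the two factors of $(-1)^{n}$ combine to $+1$ and one is left with $(n-1)(n-2)!=(n-1)!$, which is the asserted value (the occurrence of $\Gamma(n-1)$ in the statement is a typographical slip for $\Gamma(n)=(n-1)!$).

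There is no real obstacle here; the only checkpoint worth noting is that the sign cancellation between the $(-1)^n$ from Corollary \ref{pol02_01} and the $(-1)^n$ inside the closed form for $\oli{n}{n-1}$ makes the answer positive independently of the parity of $n$, which matches the fact that $P_n$ is monic of degree $n-1$, so its $(n-1)$-th derivative is automatically the positive constant $(n-1)!$. A quick table check (e.g.\ $n=3$: $P_3(x)=x^{2}+2x-1$ and $P_3''(x)=2=2!$) confirms the calculation, so no induction is needed — the statement is purely a consequence of specializing the previous corollary.
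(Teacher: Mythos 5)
Your proof is correct and follows essentially the same route as the paper, which simply invokes Proposition \ref{pol02} with $k=r=n-1$ together with $P_1(x)=1$; your use of Corollary \ref{pol02_01} at $k=n-1$ is the same specialization, and you additionally carry out the evaluation $\oli{n}{n-1}=(-1)^n(n-2)!$ and the sign cancellation that the paper leaves implicit (your reading of $\Gamma(n-1)$ as a slip for $\Gamma(n)=(n-1)!$ is also right). The only caveat, shared with the paper's own argument, is that the case $n=1$ must be handled separately as the trivial identity $P_1(x)=1=0!$, since the closed form \eqref{eq07} and the factor $(n-2)!$ do not apply there.
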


\begin{proof} 
Just use proposition \ref{pol02} in the specific case $k=r=n-1$ and consider $P_1(x)=1$.

\end{proof}

As second example there are polynomials definable by numbers $\olipt$ which are useful in the study of $\del{n}{k}$ series:
\begin{definition}\label{polQ}
\begin{equation}\label{polQ01}
Q_k(x)=x\sum_{i=0}^{k-1}\oli{k}{i}Q_{k-1-i}(x)\mbox{ for }k=1,2,\dots
\end{equation}
where $Q_0(x)=\frac{1}{x}$
\end{definition}
Once defined polynomials $Q_k(x)$, we can find a more accurate expression of $\del{n}{k}$:
\begin{proposition}\label{pdel01}
\begin{equation}\label{delta03}
\del{n}{k}=\binom{n}{k}\frac{Q_k(x)}{x^{k-1}}
\end{equation}
\end{proposition}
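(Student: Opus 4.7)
The plan is to proceed by induction on $n$, with $k$ ranging over all admissible values at each stage. The base case $n=0$ (or equivalently $k=0$) is immediate: $\Delta_0^{(n)}(x)=1$ and $\binom{n}{0}Q_0(x)/x^{-1}=x\cdot(1/x)=1$. The trivial case $k>n$ is also immediate since both sides vanish.

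For the inductive step, assume the formula holds for all indices $(n',k')$ with $n'<n$. Using the recursion in Definition \ref{delta01},
$$\del{n}{k} = \del{n-1}{k}+\sum_{i=0}^{k-1}\oli{n}{i}\frac{1}{x^i}\del{n-1-i}{k-1-i},$$
and applying the induction hypothesis to each term, the right-hand side becomes
$$\binom{n-1}{k}\frac{Q_k(x)}{x^{k-1}}+\frac{1}{x^{k-2}}\sum_{i=0}^{k-1}\oli{n}{i}\binom{n-1-i}{k-1-i}Q_{k-1-i}(x).$$
Matching this against the desired expression $\binom{n}{k}Q_k(x)/x^{k-1}$ and using Pascal's identity $\binom{n}{k}-\binom{n-1}{k}=\binom{n-1}{k-1}$, the task reduces to proving the single identity
$$\binom{n-1}{k-1}Q_k(x)=x\sum_{i=0}^{k-1}\oli{n}{i}\binom{n-1-i}{k-1-i}Q_{k-1-i}(x).$$

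By the definition of $Q_k$ in \eqref{polQ01}, $Q_k(x)=x\sum_{i=0}^{k-1}\oli{k}{i}Q_{k-1-i}(x)$, so it suffices to show termwise
$$\binom{n-1}{k-1}\oli{k}{i}=\oli{n}{i}\binom{n-1-i}{k-1-i}\quad\text{for }i=0,\dots,k-1.$$
The case $i=0$ is trivial since $\oli{n}{0}=\oli{k}{0}=1$. For $i\geq 1$, I would substitute the closed form $\oli{a}{b}=(-1)^{b+1}(b-1)!\binom{a-1}{b}$ from Proposition \ref{num1} and cancel the common factor $(-1)^{i+1}(i-1)!$, reducing the equality to the classical subset-of-subset identity
$$\binom{n-1}{k-1}\binom{k-1}{i}=\binom{n-1}{i}\binom{n-1-i}{k-1-i},$$
which follows by expanding both sides as $(n-1)!/\bigl(i!\,(k-1-i)!\,(n-k)!\bigr)$.

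The main obstacle is bookkeeping rather than genuine difficulty: one must carefully track the exponent of $x$ in the denominator so that every summand in the $\Delta$-recursion lands in the common factor $x^{k-1}$ after absorbing the $x$ from the $Q_k$ recursion. Once the reduction to the binomial identity is made, Proposition \ref{num1} does all the combinatorial work, so the proof becomes essentially mechanical.
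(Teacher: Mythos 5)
Your proof is correct and follows essentially the same route as the paper's: induction through the recursion of Definition \ref{delta01}, reduction via Pascal's identity and the $Q_k$ recursion, and the key termwise identity $\binom{n-1}{k-1}\Omega(k,i)=\Omega(n,i)\binom{n-1-i}{k-1-i}$, which is exactly the paper's Lemma \ref{lema01} with the index shifted by one and is likewise proved there by substituting the closed form of Proposition \ref{num1}. The only differences are cosmetic (you run the induction on $n$ alone and phrase the combinatorial step as the subset-of-subset identity rather than a direct factorial expansion).
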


\begin{proof}
For the proof of the proposition we need a lemma:
\begin{lemma}\label{lema01}
$$\oli{n}{i}\binom{n-1-i}{k-i}=\oli{k+1}{i}\binom{n-1}{k}$$
\end{lemma}
\begin{proof}(of the lemma \ref{lema01})\\ \\
Just applying proposition \ref{num1} and operate:
$$\oli{n}{i}\binom{n-1-i}{k-i}=(-1)^{i+1}\binom{n-1}{i}(i-1)!\binom{n-1-i}{k-i}$$
$$=(-1)^{i+1}\frac{(n-1)!}{i!(n-1-i)!}(i-1)!\frac{(n-1-i)!}{(k-i)!(n-k-1)!}$$
$$=(-1)^{i+1}\frac{(n-1)!(i-1)!}{i!(k-i)!(n-k-1)!}\frac{k!}{k!}$$
$$=(-1)^{i+1}\binom{k}{i}\binom{n-1}{k}(i-1)!$$
$$=\oli{k+1}{i}\binom{n-1}{k}$$

\end{proof}
According to the proposition, 
$$\del{n}{1}=\binom{n}{1}\frac{Q_1(x)}{x^{1-1}}=nQ_1(x)$$
but by definition 
$$Q_1(x)=x\oli{1}{0}Q_0(x)=x\frac{1}{x}=1$$ 
Therefore $\del{1}{1}=n\cdot 1=n$, which is true.\\
For $n$ and $k$ fixed, suppose that $\del{m}{r}$ satisfies the proposition for every $r$ if $m<n$ and $\del{n}{r}$ also for all $r\leq k$. Then,
$$\del{n}{k+1}=\del{n-1}{k+1}+\sum_{i=0}^{k}\oli{n}{i}\frac{1}{x^i}\del{n-1-i}{k-i}$$
and applying the hypothesis we get the equation,
$$\del{n}{k+1}=\binom{n-1}{k+1}\frac{Q_{k+1}(x)}{x^{k}}+\sum_{i=0}^{k}\oli{n}{i}\frac{1}{x^i}\binom{n-1-i}{k-i}\frac{Q_{k-i}(x)}{x^{k-i-1}}$$
$$=\binom{n-1}{k+1}\frac{Q_{k+1}(x)}{x^{k}}+\sum_{i=0}^{k}\oli{n}{i}\binom{n-1-i}{k-i}\frac{Q_{k-i}(x)}{x^{k-1}}$$
but by lemma \ref{lema01} the expression can be writen
$$\binom{n-1}{k+1}\frac{Q_{k+1}(x)}{x^{k}}+\sum_{i=0}^{k}\oli{k+1}{i}\binom{n-1}{k}\frac{xQ_{k-i}(x)}{x^{k}}$$
$$=\binom{n-1}{k+1}\frac{Q_{k+1}(x)}{x^{k}}+\binom{n-1}{k}\frac{x}{x^{k}}\sum_{i=0}^{k}\oli{k+1}{i}Q_{k-i}(x)$$
Now we can use the definition \ref{polQ} to compute the sum. The equation becomes:
$$\binom{n-1}{k+1}\frac{Q_{k+1}(x)}{x^{k}}+\binom{n-1}{k}\frac{Q_{k+1}(x)}{x^{k}}=
\left[\binom{n-1}{k+1}+\binom{n-1}{k}\right]\frac{Q_{k+1}(x)}{x^{k}}$$
Finally, by Pascal's identity \cite{Kos},  
$$\binom{n-1}{k+1}+\binom{n-1}{k}=\binom{n}{k+1}$$
which leads to
$$\del{n}{k+1}=\binom{n}{k+1}\frac{Q_{k+1}(x)}{x^{k}}$$

\end{proof}

As a consequence, the expansion series (\ref{eq04}) and (\ref{eq05}) in section \ref{sec03} can be writen as follows:

\begin{equation}
f(x)=f(a)\sum_{k=0}^{\infty}\frac{(x-a)^k}{k!}\sum_{i=0}^{k}\binom{k}{i}\frac{Q_k(a)}{a^{k-1}}\ln^{k-i}(a)
\end{equation}
for any $a\in \mathbb{R}$. In the specific case $a=1$,
\begin{equation}
f(x)=\sum_{k=0}^{\infty}\frac{(x-1)^k}{k!}Q_k(1)
\end{equation}


\begin{thebibliography}{99}
\bibitem{Abra}
\textsc{M. Abramowitz} and \textsc{I.A. Stegun}. \textit{Handbook of Mathematical Functions with Formulas, Graphs, and Mathematical Tables}. 10th ed. New York, 1972.
\bibitem{Aig}
\textsc{M. Aigner}. \textit{Combinatorial Theory}. Erlin Heidelberg, 1979.
\bibitem{Kos}
\textsc{T. Koshy}. \textit{Elementary Number Theory with Applications}. USA, 2002.
\bibitem{Rio}
\textsc{J. Riordan}. \textit{An Introduction to Combinatorial Analysis}. 4th ed. USA, 1967.

\end{thebibliography}
\end{document}